\documentclass{amsart}

\usepackage[inactive]{srcltx} 
\usepackage{amsmath, amsthm, amscd, amsfonts, amssymb, graphicx, color, extarrows}
\vfuzz2pt 

 \newtheorem{thm}{Theorem}[section]
 \newtheorem{cor}[thm]{Corollary}
 \newtheorem{lem}[thm]{Lemma}

 \theoremstyle{definition}
 
 \theoremstyle{remark}
 

\begin{document}

\title[Linear maps characterized by special products...]
 {Linear maps characterized by special products on standard operator algebras }

\author{ Amin Barari}
\thanks{{\scriptsize
\hskip -0.4 true cm \emph{MSC(2010)}:  47L10, 47B49, 47B47.
\newline \emph{Keywords}: Standard operator algebra, linear map, zero product.\\}}

\address{Department of Mathematics, Payam Noor University, P. O. Box 19395-3697, Tehran, Iran.}

\email{aminbarari20@gmail.com; aminbarari7@gmail.com}

\address{}

\email{}

\thanks{}

\thanks{}

\subjclass{}

\keywords{}

\date{}

\dedicatory{}

\commby{}


\begin{abstract}
Let $\mathcal{A}$ be a unital standard algebra on a complex Banach space $\mathcal{X}$ with $dim\mathcal{X}\geq 2$. We characterize the linear maps $\delta , \tau : \mathcal{A}\rightarrow B(\mathcal{X})$ satisfying $ A \tau ( B) + \delta ( A) B = 0$ whenever $A,B\in  \mathcal{A}$ are such that $AB=0$. 

\end{abstract}

\maketitle

\section{Introduction}
One of the interesting issues in mathematics is the determination of the structure of linear (additive) mappings on algebras (rings) that act through zero products in the same way as certain mappings, such as homomorphisms, derivations, centralizers, etc. For instance, see \cite{al1,bre1, bur, chb, h1, h11, gh12,gh4, h2, h3, h4,h5, h6, gh5, li, qi,xu,zhu} and references therein. Among these issues, one can point out the problem of characterizing a linear (additive) map $\delta$ from an algebra (ring) $\mathcal{A}$ into an $\mathcal{A}$-bimodule $\mathcal{M}$, which satisfy 
\begin{equation}\label{d3}
ab = 0 \Longrightarrow a \tau ( b) + \delta ( a ) b = 0 , ~~~ (a,b\in \mathcal{A}) . 
\end{equation}
where $\delta : \mathcal{A} \rightarrow \mathcal{M}$ and $\tau: \mathcal{A} \rightarrow \mathcal{M}$ are linear (additive) maps. The condition \eqref{d3} has also been studied by some authors and the mappings $\delta$ and $ \tau$ have been characterized on different algebras (rings) (see \cite{gh3, lee}). In \cite{ben}, the authors consider linear maps $\delta, \tau: \mathcal{A} \rightarrow \mathcal{M}$ satisfying \eqref{d3} and prove that if the unital algebra $\mathcal{A}$ is generated by idempotents, then $\delta$ and $\tau$ are of the form $\delta(a)=d(a)+\delta(1)a$ and $\tau(a)=d(a)+a\tau(1)$ ($a\in \mathcal{A}$), where $d: \mathcal{A} \rightarrow \mathcal{M}$ is a derivation. Also, characterizations of the maps $\delta$ and $\tau$ are given if $\mathcal{A}$ is assumed to be a triangular algebra under some constraints on the bimodule $\mathcal{M}$. In this paper, we describe the linear mappings of the standard operator algebras in a Banach space that satisfy \eqref{d3}.

\section{The main results}
Throughout this paper, all algebras and vector spaces will be over the complex field $\mathbb{C}$. Let $\mathcal{X}$ be a Banach space. We denote by $B(\mathcal{X})$ the algebra of all bounded linear operators on $\mathcal{X}$, and $F(\mathcal{X})$ denotes the algebra of all finite rank operators in $B(\mathcal{X})$. Recall that a \textit{standard operator algebra} is any subalgebra of $B(\mathcal{X})$ which contains $F(\mathcal{X})$. We shall denote the identity matrix of $B(\mathcal{X})$ by $I$. In Theorem~\ref{t21} of this article we characterize the linear maps $\delta, \tau: \mathcal{A} \rightarrow B(\mathcal{X})$ satisfying \eqref{d3}, where $\mathcal{A}$ is a unital standard operator algebra. 
\begin{thm} \label{t21}
Let $ \mathcal{X} $ be a Banach space, $dim \mathcal{X} \geq 2$, and let $ \mathcal{A} \subseteq B ( \mathcal{X} ) $ be a unital standard operator algebra. Suppose that $ \delta $ and $ \tau $ be linear maps from $ \mathcal{A} $ into $ B ( \mathcal{X} ) $ satisfying
\[ A B = 0 \Longrightarrow A \tau ( B) + \delta ( A ) B = 0 , ~~~ (A, B \in \mathcal{A}) .  \]
Then there exist $ R , S, T  \in B ( \mathcal{X} ) $ such that 
\[ \delta (A) = A S   - R A  , ~~  ~\tau ( A) = A T - S A \]
for all $ A \in \mathcal{A} $. 
\end{thm}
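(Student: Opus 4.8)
\medskip
\noindent\emph{Plan of proof.} The statement is equivalent to producing a single operator $S\in B(\mathcal X)$ with $\delta(A)=AS-(S-\delta(I))A$ and $\tau(A)=A(S+\tau(I))-SA$ for all $A\in\mathcal A$; one then sets $R:=S-\delta(I)$ and $T:=S+\tau(I)$, and the converse direction is immediate, since for $AB=0$ one has $A\tau(B)+\delta(A)B=ABT-RAB=0$. So the whole task is to build $S$. We shall use that $\mathcal A\supseteq F(\mathcal X)$ — hence every rank--one operator $x\otimes f\colon y\mapsto f(y)x$ and every rank--one idempotent lies in $\mathcal A$ — and that $\mathcal B:=\mathbb C I+F(\mathcal X)$ is itself a standard operator algebra which is unital and equals the linear span of its idempotents, hence is a unital algebra generated by idempotents.

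\smallskip
\noindent\emph{Step 1: the idempotent core $\mathcal B$.} The restrictions $\delta|_{\mathcal B},\tau|_{\mathcal B}\colon\mathcal B\to B(\mathcal X)$ still satisfy the zero--product condition with respect to the $\mathcal B$-bimodule $B(\mathcal X)$, and $\mathcal B$ is unital and generated by idempotents; so, starting from the identity obtained by applying the hypothesis to the pairs $(P,I-P)$ and $(I-P,P)$ for idempotents $P$ and subtracting, the structure theorem of \cite{ben} yields a derivation $d\colon\mathcal B\to B(\mathcal X)$ with $\delta(A)=d(A)+\delta(I)A$ and $\tau(A)=d(A)+A\tau(I)$ for $A\in\mathcal B$. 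As $\mathcal B$ is a standard operator algebra, $d$ is spatial by the classical description of derivations of such algebras: there is $S\in B(\mathcal X)$ with $d(A)=AS-SA$ on $\mathcal B$. Putting $R:=S-\delta(I)$ and $T:=S+\tau(I)$ we already have
\[ \delta(K)=KS-RK,\qquad \tau(K)=KT-SK\qquad(K\in F(\mathcal X)). \]

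\smallskip
\noindent\emph{Step 2: localisation, then extension to $\mathcal A$.} First, for $B\in\mathcal A$ and a nonzero $u\in\ker B$, choose $f'$ with $f'(u)=1$ and put $p:=u\otimes f'\in F(\mathcal X)$; then $p^{2}=p$ and $Bp=(Bu)\otimes f'=0$, so $B\tau(p)+\delta(B)p=0$, and substituting $\tau(p)=pT-Sp$ and $Bp=0$ reduces this to $(\delta(B)u-BSu)\otimes f'=0$, i.e.\ $\delta(B)u=BSu$ whenever $u\in\ker B$. Dually, for $C\in\mathcal A$ and $g'\in\ker C^{*}$, choosing $v$ with $g'(v)=1$ and $p':=v\otimes g'$ gives $p'C=v\otimes(C^{*}g')=0$, so $p'\tau(C)+\delta(p')C=0$, and substituting $\delta(p')=p'S-Rp'$ yields $p'(\tau(C)+SC)=0$, i.e.\ $(\tau(C)+SC)^{*}g'=0$ whenever $g'\in\ker C^{*}$. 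Now fix $A\in\mathcal A$ and a nonzero $x\in\mathcal X$, pick $f$ with $f(x)=1$, and set $A':=(Ax)\otimes f\in F(\mathcal X)$; then $A'x=Ax$, so $x\in\ker(A-A')$, and the first identity applied to $A-A'$ at $x$ gives $\delta(A-A')x=(A-A')Sx$. Using $\delta(A')=A'S-RA'$ from Step~1 and linearity, $\delta(A)x=ASx-R(A'x)=ASx-RAx$; as $x$ was arbitrary, $\delta(A)=AS-RA$. Symmetrically, for a nonzero $g\in\mathcal X^{*}$ choose $u$ with $g(u)=1$ and set $B':=u\otimes(B^{*}g)\in F(\mathcal X)$; then $(B')^{*}g=B^{*}g$, so $g\in\ker(B-B')^{*}$, and the second identity applied to $B-B'$ at $g$, combined with $\tau(B')=B'T-SB'$ and $(B'T)^{*}g=(BT)^{*}g$, gives $(\tau(B)+SB-BT)^{*}g=0$; as $g$ was arbitrary, $(\tau(B)+SB-BT)^{*}=0$, whence $\tau(B)=BT-SB$. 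This is the assertion.

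\smallskip
\noindent\emph{Anticipated main obstacle.} The passage over $\mathcal A\setminus\mathcal B$ is a clean localisation and the idempotent algebra over $\mathcal B$ is routine; the substantive content sits in Step~1, namely the correct use of the structure theorem of \cite{ben} on the (nonclosed) unital idempotent--generated algebra $\mathbb C I+F(\mathcal X)$, and the claim that the resulting derivation $d$ is implemented by an honest bounded operator $S\in B(\mathcal X)$ — this last point, together with the boundedness needed for the adjoints $S^{*},T^{*}$ appearing in Step~2, is where automatic--continuity issues for the a priori merely linear maps $\delta,\tau$ must be settled.
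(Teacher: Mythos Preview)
Your proof is correct, and it follows a genuinely different route from the paper's.

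The paper never invokes \cite{ben}. Instead it establishes, by hand via rank-one idempotents and the relations $APQ=AQP=0$, $PQA=QPA=0$, the identities
\[
A\tau(X)+\delta(A)X = AX\tau(I)+\delta(AX),\qquad \tau(XA)+\delta(I)XA = X\tau(A)+\delta(X)A
\]
for all $A\in\mathcal A$, $X\in F(\mathcal X)$ (Lemmas~3.1--3.2), and from these derives the generalized--derivation identities $\delta(AB)=A\delta(B)+\delta(A)B-A\delta(I)B$ and $\tau(AB)=A\tau(B)+\tau(A)B-A\tau(I)B$ on all of $\mathcal A$ (Lemmas~3.3--3.4), together with $\delta(A)-\delta(I)A=\tau(A)-A\tau(I)$ (Lemma~3.5). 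The passage from $F(\mathcal X)$ to $\mathcal A$ is each time by essentiality of $F(\mathcal X)$ in $B(\mathcal X)$ (i.e.\ $MX=0$ for all $X\in F(\mathcal X)$ forces $M=0$). Only at the very end does the paper set $\Delta(A)=\delta(A)-\delta(I)A$, observe it is a derivation on $\mathcal A$, and appeal to \cite[Theorem 2.5.14]{dal} for the implementing $S$.

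Your argument instead outsources the structure on $\mathcal B=\mathbb CI+F(\mathcal X)$ to \cite{ben}, applies the spatial implementation already on $\mathcal B$, and then replaces the paper's ideal--essentiality step by a direct pointwise localisation: manufacture a finite-rank $A'$ agreeing with $A$ at $x$ (resp.\ $B'$ with $B^{*}$ at $g$), and use the zero-product hypothesis on $A-A'$ (resp.\ $B-B'$) against a rank-one idempotent. This is slick and avoids ever proving a multiplicative identity on all of $\mathcal A$; the paper's route, by contrast, is self-contained (no black-box \cite{ben}) and yields the intermediate Lemmas~3.3--3.5 as results of independent interest. Both arguments ultimately rest on the same spatial-derivation theorem, so your ``anticipated obstacle'' about boundedness of $S$ is resolved by the same citation the paper uses.
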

From Theorem~\ref{t21}, one gets the following corollary, which is already proved in \cite[Theorem 6]{jin}. So it can be said that Theorem~\ref{t21} is a generalization of \cite[Theorem 6]{jin}.
\begin{cor} \label{c2.2}
Let $ \mathcal{X} $ be a Banach space, $dim \mathcal{X} \geq 2$, and let $ \mathcal{A} \subseteq B ( \mathcal{X} ) $ be a unital standard operator algebra.  Assume that $ \delta : \mathcal{A} \to B ( \mathcal{X} ) $ is a linear map  satisfying
\[ A B = 0 \Longrightarrow A \delta ( B) + \delta ( A ) B = 0 , ~~~ (A, B \in \mathcal{A}) .  \]
Then there exist $ R , S \in B ( \mathcal{X} ) $ such that 
\[ \delta (A) = A S   - R A  \]
 for all $ A \in \mathcal{A} $ and $ R - S \in Z ( B( \mathcal{X} )) $. 
\end{cor}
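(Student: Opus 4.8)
The plan is to obtain the corollary as a direct consequence of Theorem~\ref{t21}. First I would apply Theorem~\ref{t21} with $\tau := \delta$: the hypothesis of the corollary is precisely the hypothesis of the theorem in this case, so there exist $R,S,T\in B(\mathcal{X})$ with
\[ \delta(A) = AS - RA \quad\text{and}\quad \delta(A) = AT - SA \qquad (A\in\mathcal{A}). \]
The first of these is already the stated form of $\delta$, so the only thing left to prove is the additional assertion $R-S\in Z(B(\mathcal{X}))$.

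Subtracting the two expressions for $\delta(A)$ gives $A(S-T) = (R-S)A$ for every $A\in\mathcal{A}$. Since $\mathcal{A}$ is unital, substituting $A=I$ yields $S-T = R-S$; denote this common operator by $C := R-S$. Then the displayed identity becomes $AC = CA$ for all $A\in\mathcal{A}$, i.e. $C$ lies in the commutant of $\mathcal{A}$ inside $B(\mathcal{X})$.

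Finally I would use that $\mathcal{A}$, being a standard operator algebra, contains $F(\mathcal{X})$ and in particular every rank-one operator $x\otimes f$ (with $x\in\mathcal{X}$, $f\in\mathcal{X}^{*}$, acting by $y\mapsto f(y)x$). Evaluating the commutation relation $C(x\otimes f) = (x\otimes f)C$ on an arbitrary vector $y$ gives $f(y)\,Cx = f(Cy)\,x$, whence $Cx$ is a scalar multiple of $x$ for every nonzero $x$. Because $\dim\mathcal{X}\geq 2$, comparing the corresponding scalars on a pair of linearly independent vectors forces $C=\lambda I$ for some $\lambda\in\mathbb{C}$. Hence $R-S = C\in\mathbb{C}I = Z(B(\mathcal{X}))$, which together with $\delta(A)=AS-RA$ is exactly the assertion of the corollary. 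I do not expect a genuine obstacle here: essentially all the work is carried by Theorem~\ref{t21}, and the only point needing care is the commutant computation, namely the standard fact that an operator commuting with all finite rank operators on a space of dimension at least $2$ must be a scalar.
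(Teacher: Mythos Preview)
Your proposal is correct and follows exactly the approach the paper indicates: the paper does not give a separate proof of Corollary~\ref{c2.2} but simply states that it follows from Theorem~\ref{t21}, and you carry this out by taking $\tau=\delta$ and filling in the routine details. The only extra content you supply is the verification that $R-S\in Z(B(\mathcal{X}))$ via the standard fact that any operator commuting with all of $F(\mathcal{X})$ (hence with all rank-one operators) on a space of dimension $\ge 2$ is a scalar; this is precisely what is needed, and your argument for it is correct.
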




\section{Proof of Theorem~\ref{t21}}
We proof Theorem~\ref{t21} through the following lemmas. 
\begin{lem} \label{l3.1}
For all $ A \in \mathcal{A} $ and $ X \in F ( \mathcal{X} ) $, we have
\[ A \tau ( X ) + \delta ( A) X = AX \tau (I) + \delta ( A X ) . \]
\end{lem}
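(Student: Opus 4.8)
The plan is to feed the hypothesis pairs of operators whose product vanishes for the trivial reason that one factor is $I-P$ and the other is built from a rank-one idempotent $P$, and to combine the two resulting identities so that everything except the desired terms cancels. First I would reduce to the case in which $X$ is itself a rank-one idempotent. For a fixed $A\in\mathcal{A}$, both sides of the claimed identity are linear in $X$ (since $\delta$, $\tau$, and $X\mapsto AX$ are linear), so it is enough to verify it on a spanning set of $F(\mathcal{X})$. Every rank-one operator $y\otimes g$ is a complex linear combination of rank-one idempotents: if $g(y)\neq 0$ it is the scalar $g(y)$ times the idempotent $y\otimes\bigl(g/g(y)\bigr)$, while if $g(y)=0$ then, picking by Hahn--Banach a functional $h$ with $h(y)=1$, one has $y\otimes g=y\otimes(g+h)-y\otimes h$, a difference of two rank-one idempotents. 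Hence the rank-one idempotents span $F(\mathcal{X})$ as a vector space, and the reduction is legitimate.

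Next, fix a rank-one idempotent $P\in F(\mathcal{X})\subseteq\mathcal{A}$ and an arbitrary $A\in\mathcal{A}$. Since $\mathcal{A}$ is unital, $I-P\in\mathcal{A}$, and a direct computation gives $(A(I-P))P=0$ and $(AP)(I-P)=0$. Applying the hypothesis to the pair $(A(I-P),P)$ and expanding by linearity yields
\[ A\tau(P)-AP\tau(P)+\delta(A)P-\delta(AP)P=0, \]
and applying it to the pair $(AP,I-P)$ yields
\[ AP\tau(I)-AP\tau(P)+\delta(AP)-\delta(AP)P=0. \]
Subtracting the second identity from the first cancels the $AP\tau(P)$ and $\delta(AP)P$ terms and leaves exactly $A\tau(P)+\delta(A)P=AP\tau(I)+\delta(AP)$, which is the assertion of the lemma for $X=P$. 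Invoking the linearity reduction of the previous paragraph then gives the statement for every $X\in F(\mathcal{X})$.

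The genuinely routine parts are the expansions of $\tau(I-P)$, $\delta(A(I-P))$, and the verification that the stated products are zero. The only point needing a little care — and the closest thing to an obstacle — is handling the nilpotent rank-one operators (those with $g(y)=0$) in the reduction step; the observation that they too lie in the span of the rank-one idempotents is what allows the whole argument to rest on the idempotent case, where the factorization trick with $P$ and $I-P$ is available. Everything else is bookkeeping.
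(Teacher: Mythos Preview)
Your proof is correct and follows essentially the same route as the paper: both apply the zero-product hypothesis to the pairs $\bigl(A(I-P),P\bigr)$ and $\bigl(AP,I-P\bigr)$ for a rank-one idempotent $P$, subtract (equivalently, compare) the resulting identities, and then extend by linearity using the fact that rank-one idempotents span $F(\mathcal{X})$. The only cosmetic difference is that you supply an explicit Hahn--Banach argument for the spanning fact, whereas the paper simply cites it.
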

\begin{proof}
Let $ P \in \mathcal{A} $ be an idempotent operator of rank one. Set $ Q = I - P $. Then for all $ A \in \mathcal{A} $, we obtain $ APQ = 0 $. So by assumption we have 
\[ A P \tau ( Q) + \delta ( AP ) Q = 0 . \]
Therefore,
\[ AP \tau (I) - AP \tau (P) + \delta ( AP ) - \delta ( AP ) P = 0  . \]
Hence
\begin{equation} \label{eq3}
AP \tau (I) + \delta ( AP ) = AP \tau (P) + \delta (AP) P .
\end{equation}
Since $ AQP = 0 $ ($A \in \mathcal{A}$), it follows that
\[ A Q \tau (P) + \delta ( AQ) P = 0 .\]
So
\[ A \tau (P) - A P \tau (P) + \delta (A) P - \delta ( AP ) P = 0 . \]
Consequently 
\begin{equation} \label{eq4}
A \tau (P) + \delta ( A)P = A P \tau (P) + \delta ( AP ) P. 
\end{equation}
By comparing \eqref{eq3} and \eqref{eq4}, we obtain 
\[ A \tau (P) + \delta (A) P = AP \tau (I) + \delta  (AP) . \]
By \cite[Lemma 1.1]{bur}, every element $ X \in F ( \mathcal{X} ) $ is a linear combination of rank-one idempotents, and so
\[ A \tau ( X) + \delta ( A) X = AX \tau (I) + \delta (AX)  \]
for all $ A \in \mathcal{A} $ and $ X \in F ( \mathcal{X} ) $. 
\end{proof}
\begin{lem} \label{l3.2}
For all $ A \in \mathcal{A} $ and $ X \in F ( \mathcal{X} ) $, we have
\[ \tau ( X A ) + \delta ( I) X A = X \tau (A) + \delta ( X ) A . \]
\end{lem}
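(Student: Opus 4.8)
The plan is to mirror the argument of Lemma~\ref{l3.1}, but now multiplying on the left by $Q=I-P$ and on the right by an arbitrary element of $\mathcal{A}$, so that the roles of $\delta$ and $\tau$ get interchanged relative to the previous lemma. First I would fix a rank-one idempotent $P\in\mathcal{A}$ and set $Q=I-P$. For any $A\in\mathcal{A}$ we have $QP\cdot PA=QPA\cdot \text{(nothing)}$—more precisely, I want to produce two zero-product pairs. Observe $QP=0$, so $Q(PA)$ need not vanish; instead the useful identities are $(QP)A'=0$-type relations. Concretely: since $QP=0$ we get, applying the hypothesis with the pair $(Q,PA)$ only if $Q\cdot PA=0$, which fails. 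So the correct pairs are: from $PQ=0$ we get $P\tau(QA)+\delta(P)QA=0$ for every $A$, because $P\cdot(QA)= (PQ)A=0$; and from $QP=0$ we get $Q\tau(PA)+\delta(Q)PA=0$, because $Q\cdot(PA)=(QP)A=0$.

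Next I would expand both identities using $Q=I-P$ and linearity, exactly as in the previous lemma. From $P\tau(QA)+\delta(P)QA=0$ one obtains
\[
P\tau(A)-P\tau(PA)+\delta(P)A-\delta(P)PA=0,
\]
hence $P\tau(A)+\delta(P)A=P\tau(PA)+\delta(P)PA$. From $Q\tau(PA)+\delta(Q)PA=0$ one gets
\[
\tau(PA)-P\tau(PA)+\delta(I)PA-\delta(P)PA=0,
\]
hence $\tau(PA)+\delta(I)PA=P\tau(PA)+\delta(P)PA$. Comparing the two right-hand sides yields
\[
P\tau(A)+\delta(P)A=\tau(PA)+\delta(I)PA,
\]
which is precisely the desired identity with $X=P$ a rank-one idempotent. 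Then I would invoke \cite[Lemma 1.1]{bur} again: every $X\in F(\mathcal{X})$ is a linear combination of rank-one idempotents, and since both sides of the asserted equation are linear in $X$, the identity
\[
\tau(XA)+\delta(I)XA=X\tau(A)+\delta(X)A
\]
extends to all $X\in F(\mathcal{X})$ and all $A\in\mathcal{A}$.

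The only real point requiring care is the bookkeeping of which variable plays the role of the "finite-rank" factor and which plays the role of the general algebra element: in Lemma~\ref{l3.1} the idempotent sits on the \emph{right} (pairs $APQ=0$, $AQP=0$), whereas here it must sit on the \emph{left} (pairs $PQA=0$, $QPA=0$) so that the conclusion has $X$ on the left. I do not anticipate a genuine obstacle—the computation is the same two-line expansion followed by the same density argument—so the main thing is simply to keep the left/right multiplications straight when expanding $Q=I-P$.
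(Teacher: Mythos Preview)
Your proposal is correct and follows essentially the same argument as the paper: the same two zero-product pairs $P\cdot(QA)=0$ and $Q\cdot(PA)=0$, the same expansions via $Q=I-P$, the same comparison yielding $\tau(PA)+\delta(I)PA=P\tau(A)+\delta(P)A$, and the same extension to all $X\in F(\mathcal{X})$ via \cite[Lemma~1.1]{bur}. Your write-up is in fact slightly cleaner, since the paper has a small typo ($\delta(P)AQ$ in place of $\delta(P)QA$) in the first displayed identity.
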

\begin{proof}
Let $ P \in \mathcal{A} $ be a rank-one idempotent operator, and $ Q = I - P $. So $ PQA = 0 $ and $ QPA = 0$ for all $ A \in \mathcal{A} $. By assumption we have 
\[ P \tau ( QA ) + \delta (P) AQ = 0 \]
and
\[  Q \tau (PA) + \delta ( Q) PA  = 0 . \]
From these equations we have the followings, respectively.
\[ P \tau ( A) + \delta ( P ) A = P \tau ( PA ) + \delta (P) PA \]
and
\[ \tau ( PA) + \delta (I) PA = P \tau (PA) + \delta (P) PA . \]
Comparing these equations, we get
\[ \tau ( PA) + \delta (I) PA = P \tau (A) + \delta (P) A . \]
Now, by \cite[Lemma 1.1]{bur} we have
\[ \tau ( X A ) + \delta (I) X A = X \tau (A) + \delta (X) A  \]
for all $ A \in \mathcal{A} $ and $ X \in F ( \mathcal{X} ) $. 
\end{proof}
\begin{lem} \label{l3.3}
For all $ A,B \in \mathcal{A} $, we have
\[ \delta ( AB) = A \delta ( B) + \delta (A) B - A \delta (I) B . \]
\end{lem}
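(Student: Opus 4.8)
The plan is to derive the identity $\delta(AB) = A\delta(B) + \delta(A)B - A\delta(I)B$ by exploiting the two previous lemmas, first in the case where one of the factors is a finite rank operator and then bootstrapping to general $A,B\in\mathcal{A}$. The key realization is that Lemma~\ref{l3.1} and Lemma~\ref{l3.2} together should pin down the ``$\tau$-part'' of the relations and let us eliminate $\tau$ entirely in favor of $\delta$, at least modulo the fixed operators $\tau(I)$ and $\delta(I)$.

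First I would specialize Lemma~\ref{l3.1} by taking $A$ to be the identity $I\in\mathcal{A}$: this yields $\tau(X) + \delta(I)X = X\tau(I) + \delta(X)$ for all $X\in F(\mathcal{X})$, which solves for $\tau(X)$ in terms of $\delta$. Substituting this expression for $\tau(X)$ back into the full Lemma~\ref{l3.1} (for general $A$) should give $\delta(AX) = A\delta(X) + \delta(A)X - A\delta(I)X$ for all $A\in\mathcal{A}$, $X\in F(\mathcal{X})$ — i.e. exactly the desired identity when $B=X$ is finite rank. One has to be a little careful that the term $AX\tau(I)$ on the right side of Lemma~\ref{l3.1} is canceled correctly against the $X\tau(I)$ coming from the substituted $\tau(X)$ after multiplying through by $A$; this is the only place where a term might be dropped or mishandled, so I would write it out carefully.

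Having the identity for $B\in F(\mathcal{X})$, the next step is to promote it to arbitrary $B\in\mathcal{A}$. The natural device is: for any $B\in\mathcal{A}$ and any rank-one idempotent $P$, the product $BP$ lies in $F(\mathcal{X})$, and similarly $PB\in F(\mathcal{X})$. Applying the finite-rank identity to $A$ and $BP$, and also using Lemma~\ref{l3.2} (with a finite-rank $X$ of the form $PB$ or via a rank-one idempotent) to handle the other side, one should be able to assemble $\delta(AB)$ from pieces $\delta(A(BP))$ and $\delta(A B(I-P))$. The cleanest route is probably to fix a rank-one idempotent $P$, write $B = BP + B(I-P)$, and note that although $B(I-P)$ need not be finite rank, the operator $A\cdot B(I-P)$ paired against things that vanish on the range of $P$ will let the zero-product hypothesis re-enter. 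Alternatively, and perhaps more simply, for fixed $A,B$ choose a rank-one idempotent $P$ with $P B = B$ is impossible in general, so instead I would use that $(I-P) \cdot$ (something) $= 0$ type relations to feed the original hypothesis directly with $A$ replaced by $AB$ or similar.

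The main obstacle I anticipate is precisely this last passage from finite-rank $B$ to general $B\in\mathcal{A}$: Lemmas~\ref{l3.1} and~\ref{l3.2} only give information when a finite-rank operator appears in a specific slot, so one cannot just take limits. The resolution should be a decomposition argument using a single rank-one idempotent $P$ and $Q = I-P$ together with the hypothesis applied to pairs like $(AP, QB)$ or $(A(I-P), PB)$ — products that vanish for structural reasons — exactly in the spirit of how Lemmas~\ref{l3.1} and~\ref{l3.2} were themselves proved. Once the two ``halves'' $\delta(APB')$-type terms are obtained and added, the cross terms involving $\delta(I)$, $\tau(I)$ should telescope, leaving $\delta(AB) = A\delta(B) + \delta(A)B - A\delta(I)B$. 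I would also keep in mind the degenerate possibility that $\mathcal{A}$ might contain no rank-one idempotent other than through $F(\mathcal{X})$ — but since $F(\mathcal{X})\subseteq\mathcal{A}$ and $\dim\mathcal{X}\geq 2$, rank-one idempotents are plentiful, so this is not an issue.
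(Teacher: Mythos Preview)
Your first step is exactly right, and in fact a touch cleaner than the paper: specializing Lemma~\ref{l3.1} at $A=I$ gives $\tau(X)=\delta(X)+X\tau(I)-\delta(I)X$, and substituting back into Lemma~\ref{l3.1} yields
\[
\delta(AX)=A\delta(X)+\delta(A)X-A\delta(I)X\qquad(A\in\mathcal{A},\ X\in F(\mathcal{X})).
\]
(The paper records the equivalent relation for $\tau$, your equation for $\delta$ is just as good.)

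The genuine gap is in your second step. You try to promote this to arbitrary $B$ by decomposing $B=BP+B(I-P)$ with a rank-one idempotent $P$, but $B(I-P)$ is not finite rank, and the various fallback ideas you list (feeding $(AP,QB)$ or $(A(I-P),PB)$ back into the zero-product hypothesis) do not obviously close up --- you yourself flag this as the main obstacle, and indeed it is one your outline does not resolve.

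The missing idea is that you do not need to decompose $B$ at all. You already have the identity above for \emph{every} $A\in\mathcal{A}$ and every finite-rank $X$; in particular it holds with $A$ replaced by $AB$ and also with $X$ replaced by $BX\in F(\mathcal{X})$. Computing $\delta(ABX)$ in these two ways and comparing gives
\[
\delta(AB)X=\bigl(A\delta(B)+\delta(A)B-A\delta(I)B\bigr)X\qquad\text{for all }X\in F(\mathcal{X}).
\]
Now invoke the fact (used repeatedly in the paper) that $F(\mathcal{X})$ is an essential ideal of the primitive algebra $B(\mathcal{X})$: if $TX=0$ for all $X\in F(\mathcal{X})$ then $T=0$. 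This cancels the $X$ and finishes the proof. The paper does exactly this (phrased via $\tau(ABX)$ rather than $\delta(ABX)$, but the mechanism is identical). Lemma~\ref{l3.2} is not needed here; it is used later for the companion identity for~$\tau$.
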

\begin{proof}
Taking $ A= I $ in Lemma~\ref{l3.1}, we find that 
\begin{equation} \label{eq5}
\delta (X) = \tau (X) - X \tau (I) + \delta (I ) X ,
\end{equation}
for all $ X \in F ( \mathcal{X} ) $. Since $  F( \mathcal{X} ) $ is an ideal in $ \mathcal{A} $, it follows from \eqref{eq5}  that
\[ \delta ( AX) = \tau ( AX ) - AX \tau (I) + \delta (I) AX  \]
for all $ A \in \mathcal{A} $ and $ X \in F ( \mathcal{X} ) $. From this equation and Lemma~\ref{l3.1}, we obtain 
\begin{equation} \label{eq6}
\tau ( AX) = A \tau (X) + \delta (A) X - \delta (I) A X 
\end{equation}
for all $ A \in \mathcal{A} $ and $ X \in F ( \mathcal{X} ) $. From \eqref{eq6}, we have
\begin{equation} \label{eq7}
\tau ( AB X) = AB \tau (X) + \delta ( AB) X - \delta (I) AB X ,
\end{equation}
for all $ A , B \in \mathcal{A} $ and $ X \in F ( \mathcal{X} ) $. On the other hand, 
\begin{align} \label{eq8}
\tau ( ABX) & = A \tau (BX) + \delta (A) B X - \delta (I) AB X  \nonumber \\
& = AB \tau (X) + A \delta (B) X - A \delta (I) B X + \delta (A) B X - \delta (I) AB X 
\end{align}
for all $ A , B \in \mathcal{A} $ and $ X \in F ( \mathcal{X} ) $. By comparing \eqref{eq7} and \eqref{eq8}, we see that
\[ \delta (AB) X = A \delta (B) X + \delta (A) B X- A \delta (I) BX    \]
for all $ A , B \in \mathcal{A} $ and $ X \in F ( \mathcal{X} ) $. Since $F ( \mathcal{X} ) $ is an essential ideal in primitive algebra $ B ( \mathcal{X} ) $, it follows that
\[ \delta ( AB ) = A \delta (B) + \delta (A) B - A \delta (I) B \]
for all $ A , B \in \mathcal{A} $.
\end{proof}
\begin{lem} \label{l3.4}
For all $ A,B \in \mathcal{A} $, we have
\[ \tau (AB) = A \tau (B) + \tau (A) B - A  \tau (I) B . \]
\end{lem}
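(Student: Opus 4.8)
The plan is to mirror the strategy used for $\delta$ in Lemma~\ref{l3.3}, but now working from the side-results that single out $\tau$. The key ingredients already available are Lemma~\ref{l3.2}, which controls $\tau(XA)$ for $X\in F(\mathcal{X})$, and equation~\eqref{eq6}, which controls $\tau(AX)$ for $X\in F(\mathcal{X})$; the aim is to play these two expressions for a triple product against one another and then use that $F(\mathcal{X})$ is an essential ideal of $B(\mathcal{X})$ to pass from ``multiplied by $X$'' to an honest identity in $\mathcal{A}$.

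First I would record, from Lemma~\ref{l3.2} with $A$ replaced by $BA$ (and $X$ unchanged), the identity
\[ \tau(XBA) + \delta(I)XBA = X\tau(BA) + \delta(X)BA \]
for all $A,B\in\mathcal{A}$ and $X\in F(\mathcal{X})$. Next I would compute $\tau(XBA)$ a second way: apply Lemma~\ref{l3.2} first to the product $(XB)\cdot A$, writing $\tau(XBA) + \delta(I)XBA = XB\,\tau(A) + \delta(XB)A$, and then expand $\delta(XB)$ using Lemma~\ref{l3.3}, namely $\delta(XB) = X\delta(B) + \delta(X)B - X\delta(I)B$. Substituting gives
\[ \tau(XBA) + \delta(I)XBA = XB\,\tau(A) + X\delta(B)A + \delta(X)BA - X\delta(I)BA. \]
Comparing the two expressions for $\tau(XBA)+\delta(I)XBA$ and cancelling the common $\delta(X)BA$ term yields
\[ X\tau(BA) = XB\,\tau(A) + X\delta(B)A - X\delta(I)BA \]
for all $A,B\in\mathcal{A}$ and $X\in F(\mathcal{X})$.

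At this point I still have $\delta$ on the right-hand side, so the final move is to convert the $\delta$-terms into $\tau$-terms. I would use Lemma~\ref{l3.3} once more, or better equation~\eqref{eq5} / the relation $\delta(X) = \tau(X) - X\tau(I) + \delta(I)X$ together with the analogous side computations, to re-express $X\delta(B)A$ and $X\delta(I)BA$. Concretely, multiplying a suitable instance of the $\delta$-versus-$\tau$ relation on the left by a finite-rank operator lets me replace $X\delta(B)$ by $X\tau(B) - XB\tau(I) + X\tau(I)B$ (this is exactly \eqref{eq6} read with the roles arranged appropriately, after left-multiplying by an element of $F(\mathcal{X})$ and using that $F(\mathcal{X})$ is an ideal). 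Carrying that substitution through, the $\delta(I)$ and $\tau(I)$ bookkeeping terms should telescope, leaving
\[ X\tau(BA) = X\bigl(B\tau(A) + \tau(B)A - B\tau(I)A\bigr) \]
for all $X\in F(\mathcal{X})$. Since $F(\mathcal{X})$ is an essential ideal in the primitive algebra $B(\mathcal{X})$, left-multiplication by all of $F(\mathcal{X})$ is faithful, so the $X$ may be stripped off, giving $\tau(BA) = B\tau(A) + \tau(B)A - B\tau(I)A$ for all $A,B\in\mathcal{A}$, which is the claim after renaming.

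The step I expect to be the main obstacle is the conversion of the residual $\delta$-terms into $\tau$-terms with all the identity-operator corrections accounted for correctly: one must be careful that every application of Lemmas~\ref{l3.1}--\ref{l3.3} and of \eqref{eq5}--\eqref{eq6} is on the correct side and that the $\delta(I)$, $\tau(I)$ pieces genuinely cancel rather than leaving a spurious central-looking term. If a direct substitution proves awkward, an alternative is to avoid $\delta$ entirely by reproving a ``$\tau$-sided'' analogue of \eqref{eq6}, i.e. $\tau(XA) = \tau(X)A + X\tau(A) - X\tau(I)A$ for $X\in F(\mathcal{X})$, obtained by combining Lemmas~\ref{l3.1} and~\ref{l3.2}, and then running the triple-product argument purely with $\tau$; the essential-ideal step at the end is the same either way.
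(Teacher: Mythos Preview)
Your ``alternative'' at the end is precisely what the paper does, and it is the clean route. The paper first combines Lemma~\ref{l3.2} with \eqref{eq5} (not Lemma~\ref{l3.1}) to obtain the purely $\tau$-sided identity
\[
\tau(XA)=X\tau(A)+\tau(X)A-X\tau(I)A\qquad (X\in F(\mathcal{X}),\ A\in\mathcal{A}),
\]
labelled \eqref{eq9}, and then computes $\tau(XAB)$ in two ways using only \eqref{eq9}; comparing and stripping the essential-ideal factor $X$ on the left gives the lemma immediately. No $\delta$ enters the triple-product step at all.

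Your primary approach, by contrast, routes through Lemma~\ref{l3.3} and then tries to convert the residual $\delta$-terms back into $\tau$-terms, and this is where there is a genuine gap. The substitution you propose, replacing $X\delta(B)$ by $X\tau(B)-XB\tau(I)+X\tau(I)B$, is not correct: the last term should be $X\delta(I)B$, not $X\tau(I)B$ (the relation you actually need is $X\delta(B)-X\delta(I)B=X\tau(B)-XB\tau(I)$), and your appeal to \eqref{eq6} ``read with the roles arranged appropriately'' does not produce this, since \eqref{eq6} has the finite-rank factor on the \emph{right}. The honest way to justify the needed identity is to combine \eqref{eq5} (applied to $XB\in F(\mathcal{X})$) with Lemma~\ref{l3.2}, and that computation is exactly the derivation of \eqref{eq9}. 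In other words, once you repair the conversion step you have reproduced the paper's argument with an unnecessary detour through Lemma~\ref{l3.3}; going straight to \eqref{eq9} is both shorter and avoids the bookkeeping hazard you correctly flagged.
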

\begin{proof}
From Lemma~\ref{l3.2} and \eqref{eq5} we conclude that
\begin{align} \label{eq9}
\tau ( X A) & = X \tau (A) + \delta (X) A - \delta (I) X A \nonumber \\
& = X \tau ( A) + \tau (x) A - X \tau (I) A , 
\end{align}
for all $ A  \in \mathcal{A} $ and $ X \in F ( \mathcal{X} ) $. Now, by using \eqref{eq9} for all $ A , B \in \mathcal{A} $ and $ X \in F ( \mathcal{X} ) $, we calculate $ \tau ( X AB ) $ in two ways and we obtain the followings.
\[ \tau ( X AB ) = X \tau (AB) - \tau (X) AB - X \tau (I) AB \]
and
\[ \tau (XAB ) = XA \tau (B) + X \tau (A) B + \tau (X) AB - X \tau (I)  AB - X A \tau (I) B . \]
Comparing these equations, we get
\[ X \tau (AB) = X A \tau (B) + X \tau (A) B - X A \tau (I) B  \]
for all $ A , B \in \mathcal{A} $ and $ X \in F ( \mathcal{X} ) $. Since $ F ( \mathcal{X} ) $ is an essential ideal in $ B ( \mathcal{X} ) $, it follows that
\[ \tau ( AB ) = A \tau (B) + \tau (A) B - A \tau (I) B \]
for all $ A , B \in \mathcal{A} $ and $ X \in F ( \mathcal{X} ) $.
\end{proof}
\begin{lem} \label{l3.5}
For all $ A \in \mathcal{A} $, we have
\[ \tau ( A) - A \tau (I) = \delta (A) - \delta (I) A .\]
\end{lem}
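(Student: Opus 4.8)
The plan is to combine the two "module-type" identities already established, namely equation \eqref{eq5} which gives $\delta(X)=\tau(X)-X\tau(I)+\delta(I)X$ for every $X\in F(\mathcal{X})$, together with Lemma~\ref{l3.3} and Lemma~\ref{l3.4}, and then push the relation from the ideal $F(\mathcal{X})$ up to all of $\mathcal{A}$ by the essentiality of $F(\mathcal{X})$ in $B(\mathcal{X})$. Concretely, I would introduce the map $\phi:\mathcal{A}\to B(\mathcal{X})$ defined by $\phi(A)=\tau(A)-A\tau(I)-\delta(A)+\delta(I)A$; the goal is to show $\phi\equiv 0$. Equation \eqref{eq5} says precisely that $\phi(X)=0$ for all $X\in F(\mathcal{X})$, so the task is to bootstrap from finite-rank arguments to arbitrary $A\in\mathcal{A}$.

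First I would record that $\phi$ is linear and compute how $\phi$ behaves under multiplication using Lemmas~\ref{l3.3} and \ref{l3.4}. For $A,B\in\mathcal{A}$, substituting the expressions $\delta(AB)=A\delta(B)+\delta(A)B-A\delta(I)B$ and $\tau(AB)=A\tau(B)+\tau(A)B-A\tau(I)B$ into the definition of $\phi(AB)$ and simplifying should yield a clean identity — I expect something of the shape $\phi(AB)=A\phi(B)$ or $\phi(AB)=\phi(A)B$, or more precisely a "bimodule cocycle" relation that, when one of the two arguments is finite rank (hence annihilated by $\phi$), collapses to a very restrictive equation. The point is that taking $B=X\in F(\mathcal{X})$ gives $\phi(AX)=A\phi(X)+(\text{terms})=(\text{terms})$, and since $AX\in F(\mathcal{X})$ the left side is also $0$; comparing the two should force a relation like $\phi(A)X=0$ for all $X\in F(\mathcal{X})$.

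Once I have $\phi(A)X=0$ for every $A\in\mathcal{A}$ and every $X\in F(\mathcal{X})$, essentiality of $F(\mathcal{X})$ as an ideal in the primitive algebra $B(\mathcal{X})$ — the same device used at the end of Lemmas~\ref{l3.3} and \ref{l3.4} — immediately yields $\phi(A)=0$, which is exactly the claim $\tau(A)-A\tau(I)=\delta(A)-\delta(I)A$ for all $A\in\mathcal{A}$. Alternatively, if the multiplicative computation instead naturally produces $X\phi(A)=0$ via the left-handed analogues \eqref{eq6}, \eqref{eq9}, the same essentiality argument applies on the other side.

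The main obstacle I anticipate is the bookkeeping in the multiplicative identity for $\phi$: getting the six or so terms from Lemmas~\ref{l3.3} and \ref{l3.4} to cancel correctly, and in particular making sure that after specializing one argument to a finite-rank operator the surviving terms genuinely reduce to $\phi(A)X$ (or $X\phi(A)$) with no stray $\delta(I)$ or $\tau(I)$ contributions. A secondary subtlety is verifying that the version of the identity I derive is the one compatible with $F(\mathcal{X})$ being a \emph{two-sided} ideal, so that the product $AX$ (or $XA$) really does land back in $F(\mathcal{X})$ and the left-hand side vanishes — but since $F(\mathcal{X})$ is a two-sided ideal in $\mathcal{A}$, both sidedness choices are available, and whichever is cleaner can be used.
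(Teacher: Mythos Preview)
Your plan is correct and is essentially the paper's own argument, packaged slightly more cleanly through the auxiliary map $\phi$. Concretely, the bookkeeping you were worried about works out exactly: substituting Lemmas~\ref{l3.3} and~\ref{l3.4} into the definition of $\phi(AB)$ gives the derivation-type identity $\phi(AB)=A\phi(B)+\phi(A)B$, so with $B=X\in F(\mathcal{X})$ one gets $0=\phi(AX)=A\phi(X)+\phi(A)X=\phi(A)X$, and essentiality of $F(\mathcal{X})$ finishes it --- precisely the computation the paper carries out directly (comparing two expressions for $\tau(AX)-AX\tau(I)$ obtained from \eqref{eq5}, Lemma~\ref{l3.3}, and Lemma~\ref{l3.4}).
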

\begin{proof}
It follows from \eqref{eq5} and Lemma~\ref{l3.3} that 
\begin{align*}
\tau ( AX ) - AX \tau (I) & = \delta (AX ) - \delta ( I ) A X \\
& = A \delta ( X) + \delta (A) X - A \delta (I) X - \delta (I) A X \\
& = A \tau (X) - AX \tau (I) + \delta (A) X - \delta(I) AX
\end{align*}
for all $ A  \in \mathcal{A} $ and $ X \in F( \mathcal{X} ) $. On the other hand, according to the Lemma~\ref{l3.4}, for all $ A \in \mathcal{A} $ and $ X \in F ( \mathcal{X} ) $, we have
\[ \tau (AX) - A X \tau (I) = A \tau (X) + \tau ( A) X - A \tau (I) X - A X \tau (I) . \]
By comparing these equations, we find that
\[ ( \delta (A) - \delta ( I ) A ) X = ( \tau (A) - A \tau (I) ) X  \]
for all $ A \in \mathcal{A} $ and $ X \in F ( \mathcal{X} ) $. Since $F ( \mathcal{X} ) $ is an essential ideal in $ B ( \mathcal{X} ) $, it follows that 
\[  \delta (A) - \delta (I) A= \tau ( A) - A \tau (I) \]
for all $ A \in \mathcal{A} $.
\end{proof}
Now, by considering the obtained results we are ready to prove Theorem~\ref{t21}. \\ \\
\textbf{Proof of Theorem \ref{t21}:}
Define the linear map $ \Delta : \mathcal{A} \to B ( \mathcal{X} ) $ by $ \Delta (A) = \delta (A) - \delta (I) A $. It follows from Lemma~\ref{l3.3} that 
\begin{align*}
\Delta (AB) & = \delta ( AB ) - \delta (I) AB \\
& = A \delta ( B) + \delta (A) B - A \delta (I ) B - \delta ( I ) AB \\
& = A \Delta (B) + \Delta ( A) B .
\end{align*} 
So $ \Delta $ is a derivation and according to \cite[Theorem 2.5.14]{dal} there exists $ S \in \mathcal{B} ( \mathcal{X} ) $ such that $ \Delta ( A) = A S - S A $ for all $ A \in \mathcal{A} $. Set $ R = S - \delta (I) $. From the definition of $ \Delta $ we conclude that $ \delta (A) = A S - R A $ for all $ A \in \mathcal{A} $. Also, by Lemma \ref{l3.5}, we have $ \Delta (A) = \tau (A) - A \tau (I) $  for all $ A \in \mathcal{A} $. Set $ T = S + \tau (I) $. Hence $\tau ( A) = AT - S A $ for all $ A \in \mathcal{A} $. The proof of theorem is complete.



\bibliographystyle{amsplain}
\bibliography{xbib}

\end{document}